\crefname{section}{Section}{Sections}
\crefname{subsection}{\S}{\S\S}
\theoremstyle{plain}
\newtheorem{lemma}{Lemma}[section]
\newtheorem{corollary}[lemma]{Corollary}
\newtheorem{theorem}[lemma]{Theorem}
\newtheorem{conjecture}[lemma]{Conjecture}
\theoremstyle{nonumberplain}
\theoremstyle{plain}
\newtheorem{definition}[lemma]{Definition}
\newtheorem{remark}[lemma]{Remark}
\crefname{definition}{definition}{definitions}
\crefname{ex}{example}{examples}
\crefname{remark}{remark}{remarks}
\crefname{convention}{convention}{conventions}
\crefname{lemma}{lemma}{lemmas}
\crefname{proposition}{proposition}{propositions}
\crefname{corollary}{corollary}{corollaries}
\crefname{theorem}{theorem}{theorems}
\crefname{conjecture}{conjecture}{conjectures}
\crefname{enumi}{}{}
\crefname{assumption}{assumption}{Assumptions}
\crefname{equation}{}{}
\numberwithin{equation}{section}
\theoremstyle{nonumberplain}
\newtheorem{proof}{Proof}
\newcommand\bN{{\mathbb N}}
\newcommand\bR{{\mathbb R}}
\newcommand\bZ{{\mathbb Z}}
\newcommand\cM{{\mathcal M}}
\newcommand\cS{{\mathcal S}}
\newcommand{\qedhere}{\mbox{}\hfill\ensuremath{\blacksquare}}
\title{Tree-optimized directed graphs}
\author{Alexandru Chirvasitu}
\begin{document}

\date{}

\newcommand{\Addresses}{{
  \bigskip
  \footnotesize

  \textsc{Department of Mathematics, University at Buffalo, Buffalo,
    NY 14260-2900, USA}\par\nopagebreak \textit{E-mail address}:
  \texttt{achirvas@buffalo.edu}

}}

\maketitle

\begin{abstract}
  For an additive submonoid $\mathcal{M}$ of $\bR_{\ge 0}$, the weight of an $\mathcal{M}$-labeled directed graph is the sum of all of its edge labels, while the content is the product of the labels. Having fixed $\mathcal{M}$ and a directed tree $E$, we prove a general result on the shape of directed $\mathcal{M}$-labeled graphs $\Gamma$ of weight $N\in \mathcal{M}$ maximizing the sum of the contents of all copies $E\subset \Gamma$.

  This specializes to recover a result of Hajac and Tobolski on the maximal number of length-$k$ paths in a directed acyclic graph. It also applies to prove a conjecture by the same authors on the maximal sum of entries of $A^k$ for a nilpotent $\bR_{\ge 0}$-valued square matrix $A$ whose entries add up to $N$. Finally, we apply the same techniques to obtain the maximal number of stars with $a$ arms in a directed graph with $N$ edges.
\end{abstract}

\noindent {\em Key words: directed acyclic graph, labeled graph, path, star}

\vspace{.5cm}

\noindent{MSC 2010: 05C35; 05C20; 05C30}


\section*{Introduction}

This note is motivated by \cite[Theorem 1.10]{ht-gr} and various ramifications thereof. The result in question gives a sharp upper bound for the number of length-$k$ paths in a directed acyclic graph (henceforth DAG, for short) with $N$ edges:

\begin{theorem}\label{th:init}
  Let $k$ and $N$ be positive integer, with $N=kq+r$ be the decomposition of $N$ modulo $k$. Then, a DAG with $N$ edges contains at most $(q+1)^r q^{k-r}$ directed paths of length $k$. 
\end{theorem}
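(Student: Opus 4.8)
The plan is to split the argument into a \emph{structural reduction}, which replaces an arbitrary DAG by a single nilpotent matrix, and an elementary \emph{optimization}. For the reduction I would apply to a DAG $G$ with $N$ edges the following edge-count-preserving operations, each of which does not decrease the number of length-$k$ directed paths: (i) delete isolated vertices; (ii) identify all sources into one source and, dually, all sinks into one sink; (iii) whenever two internal vertices are incomparable in the reachability preorder, identify them. For (ii) the length-$k$ paths are in fact in bijection before and after: two paths could only collide if two distinct edges with distinct heads (or tails) became equal, which does not happen once edge multiplicities are remembered, and acyclicity is preserved since a source or sink cannot lie on a new cycle. For (iii) the identification creates no cycle (a cycle through the merged vertex would exhibit either comparability or an original cycle), every former path survives as a path because walks in a DAG are paths, distinct paths stay distinct for the same ``distinct endpoints'' reason, and the vertex count strictly drops, so iteration terminates. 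After (i)--(iii) the vertices are totally ordered by reachability, hence $G$ has a Hamiltonian directed path; relabeling its vertices $1<\dots<n$ (a topological order), the edge multiplicities form a strictly upper triangular matrix $A\in\bZ_{\ge 0}^{n\times n}$ with entry sum $N$, and the number of length-$k$ paths equals $\Lambda_k(A):=\sum_{i_0<\dots<i_k}A_{i_0 i_1}A_{i_1 i_2}\cdots A_{i_{k-1}i_k}$. So it suffices to prove $\Lambda_k(A)\le(q+1)^r q^{k-r}$ for every such $A$ -- equivalently, to bound $\mathbf 1^{\top}A^k\mathbf 1$ for nilpotent $\bZ_{\ge 0}$-matrices of entry sum $N$, which is precisely the matrix statement advertised in the abstract.

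For the matrix bound I would argue that among strictly upper triangular $A\in\bZ_{\ge 0}^{n\times n}$ with entry sum at most $N$ and $\Lambda_k(A)$ maximal, one may take $n=k+1$; then the only increasing $(k+1)$-tuple in $\{1,\dots,n\}$ is $(1,\dots,k+1)$, every off-superdiagonal entry is unused and may be set to $0$, and $\Lambda_k(A)=\prod_{i=1}^k A_{i,i+1}$ with $\sum_i A_{i,i+1}\le N$. To push $n$ down one uses multilinearity of $\Lambda_k$ (every monomial is squarefree, and the entries of a fixed row compete for one successor in a tuple, so there are no cross terms among them): when $n\ge k+2$ I would transfer mass from a ``long'' entry $A_{ij}$ with $j\ge i+2$ toward the superdiagonal, or identify two indices, in a way that does not decrease $\Lambda_k$ while decreasing $n$. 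Showing that some such move always exists is the crux and the step I expect to be the main obstacle: the cofactor of an entry is itself a $\Lambda$-type form on the restricted matrix, a careless transfer can strictly decrease $\Lambda_k$, and one must compare the prefixes and suffixes along which a short versus a long edge can be completed to a length-$k$ path. (This structural fact -- that a path-counting maximizer is a ``linear bundle'' -- is exactly what the paper's general tree-optimization theorem delivers when $E$ is a directed path, so one may instead simply invoke that and skip the ad hoc smoothing.)

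Finally it remains to maximize $\prod_{i=1}^k m_i$ over positive integers with $\sum_{i=1}^k m_i=M\le N$, writing $M=kq'+r'$. The exchange $(m_i,m_j)\mapsto(m_i+1,m_j-1)$ strictly increases the product whenever $m_j\ge m_i+2$, so the maximum is attained exactly when the $m_i$ consist of $q'+1$ with multiplicity $r'$ and $q'$ with multiplicity $k-r'$, giving $(q'+1)^{r'}q'^{\,k-r'}$; since this quantity is nondecreasing in $M$ it is at most $(q+1)^r q^{k-r}$. Combining the three steps proves \Cref{th:init}.
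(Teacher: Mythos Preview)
Your structural reduction (i)--(iii) is carefully argued and correct as far as it goes, but it stops short: after reaching a Hamiltonian DAG you still have chords $A_{ij}$ with $j\ge i+2$ and possibly $n>k+1$, and your step~2---pushing mass to the superdiagonal or collapsing indices until $n=k+1$---is precisely the step you yourself flag as ``the main obstacle'' and leave unproved. This is a genuine gap, not a formality: a naive transfer can strictly decrease $\Lambda_k$, and the cofactor inequality needed to guarantee a non-decreasing move is exactly the missing content. Invoking the paper's tree-optimization theorem to fill it is legitimate, but then the entire vertex-level reduction (i)--(iii) becomes superfluous.

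The paper's route is a single edge-level move that subsumes both your structural reduction \emph{and} your step~2. If edges $e,f$ lie on no common length-$k$ path, set $\Sigma_e=\sum_{\alpha\ni e}\mathrm{ct}_{\not e}\,\alpha$ (the sum, over length-$k$ paths $\alpha$ through $e$, of the product of the other $k-1$ labels) and likewise $\Sigma_f$; assuming $\Sigma_e\ge\Sigma_f$, delete $f$ and add its label to $e$. The path-count changes by $\ell(f)(\Sigma_e-\Sigma_f)\ge 0$, acyclicity is preserved (an edge is only removed), and the number of bad pairs strictly drops. Iterating lands on a DAG in which every two edges share a length-$k$ path; by \Cref{le:allonpth} (discarding the cycle alternatives, which are not DAGs) this is a single length-$k$ path, and your step~3 then finishes exactly as written. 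The edge-transfer comparison is the ``some such move always exists'' fact you were seeking; it is simpler than vertex merging and---because it never uses that $E$ is a path---generalizes immediately to arbitrary pattern graphs $E$, which is what the paper exploits elsewhere.
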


There is an alternative way to state the result, that is perhaps more conceptually expressive:

\begin{corollary}\label{cor:init}
  Let $N$ and $k$ be two positive integers. The following numbers are then equal:
  \begin{enumerate}[(a)]
  \item\label{item:initlen} the maximal number of length-$k$ paths in an $N$-edge DAG;
  \item\label{item:initnr} the maximal product of $k$ non-negative integers with sum $N$. 
  \end{enumerate}
\end{corollary}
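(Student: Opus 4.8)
The plan is to show that the two numbers compared in \Cref{cor:init} both equal $(q+1)^r q^{k-r}$, where $N=kq+r$ with $0\le r<k$ as in \Cref{th:init}. On the arithmetic side, observe first that the $k$-tuple having $r$ entries equal to $q+1$ and $k-r$ entries equal to $q$ is admissible: its entries are nonnegative integers, their sum is $r(q+1)+(k-r)q=kq+r=N$, and their product is $(q+1)^r q^{k-r}$; hence the maximal product of $k$ nonnegative integers with sum $N$ is at least $(q+1)^r q^{k-r}$. (It actually equals that, by the standard smoothing argument — if a maximizing tuple had entries $a\le b-2$, replacing them by $a+1,b-1$ preserves the sum and multiplies the product by $(a+1)(b-1)/(ab)>1$, so at an optimum all entries differ pairwise by at most $1$ — but only the lower bound just obtained will be used.)

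Second, I would realize every such product by a graph. Given a $k$-tuple $(n_1,\dots,n_k)$ of nonnegative integers with $\sum_i n_i=N$, form the graph on vertices $v_0,v_1,\dots,v_k$ in which $v_{i-1}$ and $v_i$ are joined by $n_i$ parallel edges for each $i$. It is acyclic (the vertex indices give a topological order) and has $\sum_i n_i=N$ edges; moreover any directed path of length $k$ in it must traverse all $k+1$ vertices, necessarily in the order $v_0\to v_1\to\cdots\to v_k$, and thus corresponds to a choice of one of the $n_i$ parallel edges at each step $i$ — so the graph has exactly $\prod_i n_i$ paths of length $k$. Therefore the maximal number of length-$k$ paths in an $N$-edge DAG is $\ge\prod_i n_i$ for every admissible tuple, hence $\ge$ the maximal such product. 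Putting this together with the bound of \Cref{th:init} gives
\[
\max_{\text{tuples}}\ \prod_{i=1}^k n_i \ \le\ \Big(\max_{N\text{-edge DAGs}}\ \#\{\text{paths of length }k\}\Big) \ \le\ (q+1)^r q^{k-r} \ \le\ \max_{\text{tuples}}\ \prod_{i=1}^k n_i ,
\]
so all the inequalities are equalities and, in particular, the two numbers in \Cref{cor:init} coincide.

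I do not foresee a real obstacle: the substance is already in \Cref{th:init}. The only delicate points are that the witnessing graph must be allowed parallel edges — equivalently, one uses the edge-labeled simple path $v_0\to\cdots\to v_k$ with label $n_i$ on the $i$-th edge, precisely the kind of object the abstract's framework is built on — and that one should confirm this graph has \emph{exactly}, not just at least, $\prod_i n_i$ paths of length $k$, so that it witnesses both the lower bound here and the sharpness of the bound in \Cref{th:init}.
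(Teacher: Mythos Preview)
Your proof is correct and follows essentially the same route the paper takes in the Introduction: both quantities are identified with the explicit value $(q+1)^r q^{k-r}$, using \Cref{th:init} for the graph side and the smoothing argument for the arithmetic side. The paper leaves the achievability of the bound in \Cref{th:init} implicit (it is part of \cite[Theorem 1.10]{ht-gr}), whereas you supply the parallel-edge path construction explicitly; this is a welcome addition and is exactly the labeled-path witness the paper later formalizes in \Cref{def:lab} and uses in the proof of \Cref{th:mon}.
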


The fact that the maximum in point \Cref{item:initnr} is achieved for the ``best-balanced'' $k$-tuple
\begin{equation*}
  q+1,\ q+1,\ \cdots,\ q+1,\ q,\ q,\ \cdots,\ q
\end{equation*}
of positive integers can be seen easily, by noting for instance that if $a-b\ge 2$ then $ab<(a-1)(b+1)$ and replacing such pairs $(a,b)$ of positive integers in the tuple with $(a-1,b+1)$ until the maximum is achieved. 

\cite[Conjecture 1.20]{ht-gr} is an analogue of \Cref{th:init} obtained by relaxing the constraints on the adjacency matrix of the graph to allow for non-negative {\it real} (rather than integral) entries. To state it we introduce, for a matrix
\begin{equation*}
  A\in M_n(\bR_{\ge 0})
\end{equation*}
with non-negative entries, the {\it weight}
\begin{equation*}
  |A|:=\sum_{i,j=1}^n A_{ij}
\end{equation*}
(i.e. the sum of all of its entries). Then, \cite[Conjecture 1.20]{ht-gr} reads

\begin{conjecture}\label{cj:init}
  Let $N$ be a non-negative real and $k$ a positive integer, and $A$ a nilpotent square matrix with entries in $\bR_{\ge 0}$ of weight $N$. Then,
  \begin{equation*}
    |A^k|\le \left(\frac Nk\right)^k
  \end{equation*}
  and equality is achievable. 
\end{conjecture}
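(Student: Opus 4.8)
The plan is to verify achievability first and then prove the bound. For achievability, take the directed path on vertices $0,1,\dots,k$ with the edge $(i-1)\to i$ weighted $N/k$ for every $i$; its adjacency matrix $A$ is strictly upper triangular, hence nilpotent, satisfies $|A|=N$, and $A^k$ has exactly one nonzero entry (in position $(0,k)$), equal to the content $(N/k)^k$ of the unique length-$k$ path. Thus $|A^k|=(N/k)^k$.

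For the inequality, recall that a nonnegative matrix is nilpotent iff its weighted digraph is acyclic, and that after a simultaneous permutation we may take $A$ supported on strictly-upper-triangular positions relative to a fixed acyclic digraph $\Gamma$ equipped with a fixed topological order. With $\Gamma$ fixed, write $F(x)=|A^k|=\sum_{W}\prod_{e\in W}x_e$, where $x=(x_e)$ lists the edge weights and $W$ runs over length-$k$ directed walks in $\Gamma$; acyclicity makes each $W$ a path with $k$ distinct edges. Maximize $F$ over the simplex $\Delta=\{x\ge 0:\sum_e x_e=N\}$ (a compact set consisting entirely of nilpotent matrices), and let $x^\ast\in\Delta$ be a maximizer with support $S$. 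If $F(x^\ast)=0$ the bound is immediate, so assume $F(x^\ast)>0$. The Lagrange/KKT conditions supply a multiplier $\lambda$ with $\partial_e F(x^\ast)=\lambda$ for every $e\in S$, and since $F$ is homogeneous of degree $k$, Euler's identity $\sum_e x^\ast_e\,\partial_e F(x^\ast)=kF(x^\ast)$ gives $\lambda N=kF(x^\ast)$, hence $\lambda=kF(x^\ast)/N>0$.

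The crux is to show that at $x^\ast$ every vertex $v$ has out-weight $O_v:=\sum_{v\to w}x^\ast_{vw}\le N/k$. Introduce the transfer quantities $g_j(v)$ and $h_j(v)$, the total content of length-$j$ paths ending at $v$ and starting from $v$ respectively, so $g_0\equiv h_0\equiv 1$, $h_j(v)=\sum_{v\to w}x^\ast_{vw}h_{j-1}(w)$, and---because edges do not repeat on paths---$\partial_{v\to w}F(x^\ast)=\sum_{p=1}^{k} g_{p-1}(v)\,h_{k-p}(w)$. Multiplying by $x^\ast_{vw}$, summing over the out-edges of $v$, and folding the inner sum with the recursion for $h$ yields
\[
\lambda\, O_v \;=\; \sum_{w:\,v\to w} x^\ast_{vw}\,\partial_{v\to w}F(x^\ast) \;=\; \sum_{j=0}^{k-1} g_j(v)\,h_{k-j}(v)\;\le\;\sum_{j=0}^{k} g_j(v)\,h_{k-j}(v);
\]
the right-hand side is precisely the total content of all length-$k$ paths through $v$, so it is $\le F(x^\ast)$, giving $O_v\le F(x^\ast)/\lambda=N/k$. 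An induction on $j$ then gives $h_j(v)\le (N/k)^j$ for all $v$ (base $h_0\equiv 1$; step $h_j(v)\le (N/k)^{j-1}O_v\le (N/k)^j$). Finally, choosing a vertex $s$ topologically minimal among those meeting $S$ (so $s$ has no positive-weight in-edge) and an out-edge $s\to v$ in $S$, we get $\lambda=\partial_{s\to v}F(x^\ast)=g_0(s)h_{k-1}(v)=h_{k-1}(v)\le (N/k)^{k-1}$, whence $F(x^\ast)=\tfrac Nk\lambda\le (N/k)^k$. As $\Gamma$ was arbitrary, $|A^k|\le (N/k)^k$ for every weight-$N$ nonnegative nilpotent $A$.

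The two points that need care are (i) arranging the optimization so nilpotence is not an awkward side constraint---handled by fixing the acyclic digraph and a topological order, which turns the feasible region into a simplex on which the stationarity and Euler identities apply cleanly---and (ii) the identity $\lambda O_v=\sum_{j<k}g_j(v)h_{k-j}(v)$, the one nonmechanical step, where the Lagrange condition, Euler's identity, and the two transfer recursions are combined; everything else is bookkeeping. This dovetails with the paper's general structural result: the bounds $O_v\le N/k$ and its in-weight mirror are exactly the local inequalities that pin a maximizing $\mathcal M$-labeled graph to a union of directed paths, among which a single balanced path is optimal.
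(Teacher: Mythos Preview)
Your argument is correct, and it is genuinely different from the paper's. The paper proves the bound via its \Cref{th:opt}: an elementary edge-transfer (``merge $f$ into $e$'') showing that any $\cM$-labeled DAG can be improved, without loss in $\mathrm{ct}^E$, until every two edges lie on a common copy of $E$; for $E$ a length-$k$ path and $\Gamma$ acyclic, \Cref{le:allonpth} then forces $\Gamma$ to be a single length-$k$ path, whence the bound is the AM--GM inequality applied to the $k$ edge labels. No calculus, no Lagrange multipliers.

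Your route instead fixes the acyclic support, uses compactness of the weight simplex to obtain a maximizer, and exploits the KKT stationarity condition together with Euler's identity for the degree-$k$ homogeneous polynomial $F$. The non-routine step---the identity $\lambda\,O_v=\sum_{j=0}^{k-1}g_j(v)h_{k-j}(v)$ and its consequence $O_v\le N/k$---is clean and correct (acyclicity guarantees that $\sum_{j=0}^{k}g_j(v)h_{k-j}(v)$ counts each length-$k$ path through $v$ exactly once, with no overcount from walks). The endgame via a topologically minimal source is a nice touch. What the paper's approach buys is generality: the edge-transfer lemma works uniformly for any template graph $E$ (paths, stars, etc.) and any closed submonoid $\cM\subseteq\bR_{\ge 0}$, whereas your argument leans on the one-step recursion $h_j(v)=\sum_w x_{vw}h_{j-1}(w)$ that is specific to paths. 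What your approach buys is quantitative local information (the per-vertex out-weight bound $O_v\le N/k$) that the paper's reduction does not surface directly. Your closing remark tying $O_v\le N/k$ to the paper's structural result is suggestive but not literally what \Cref{th:opt} asserts; that theorem speaks only of the ``every two edges share an $E$-copy'' property, not of local degree bounds.
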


A finite directed graph will provide a non-negative adjacency matrix $A$ as above, with rows and columns indexed by vertices and such that $A_{ij}$ is the number of edges from $i$ to $j$. The nilpotence encodes the fact that the graph is acyclic. 

\begin{remark}
\cite[Conjecture 1.20]{ht-gr} also imposes the condition that the directed graph underlying the matrix $A$ have no isolated vertices, i.e. that there be no $i$ such that the $i^{th}$ row and column are both zero. This condition seems unnecessary.  
\end{remark}

We can restate the conjecture by analogy to \Cref{cor:init}. 

\begin{conjecture}\label{cj:alt}
  Let $N$ be a non-negative real and $k$ a positive integer. The following numbers are then equal:
  \begin{itemize}
  \item the maximal weight of $A^k$, where $A$ is a nilpotent square matrix of weight $N$ with non-negative real entries;
  \item the maximal product of $k$ non-negative reals with sum $N$;
  \item $\left(\frac Nk\right)^k$. 
  \end{itemize}
\end{conjecture}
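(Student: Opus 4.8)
The plan is first to dispose of the soft parts and then to reduce the remaining inequality to \Cref{th:init}. The equality of the second and third quantities is just AM--GM: among $k$ non-negative reals with sum $N$, the product attains its maximum $(N/k)^k$ precisely when all of them equal $N/k$. For the first quantity, the bound ``$\ge (N/k)^k$'' is witnessed by the weighted path on vertices $0,1,\dots,k$ carrying a single edge of weight $N/k$ from $i-1$ to $i$ for $1\le i\le k$: its adjacency matrix $A$ is strictly upper triangular (hence nilpotent), has weight $N$, and $A^k$ has the single nonzero entry $(N/k)^k$. So the whole statement reduces to \Cref{cj:init}, namely the bound $|A^k|\le (N/k)^k$ for every nilpotent $A\in M_n(\bR_{\ge 0})$ with $|A|=N$, for every $n$.

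I would prove this by descending from $\bR$ to $\bZ$ in two steps. For fixed $n$ the map $A\mapsto |A^k|$ is polynomial in the entries of $A$, hence continuous, and for $A\in M_n(\bR_{\ge 0})$ nilpotence depends only on the zero pattern: a non-negative matrix is nilpotent iff its support digraph is acyclic, since a cycle $i_1\to\cdots\to i_m\to i_1$ in the support forces $(A^{mt})_{i_1i_1}>0$ for all $t\ge 1$. Therefore any nilpotent $A\in M_n(\bR_{\ge 0})$ is an entrywise limit of nilpotent matrices $A'\in M_n(\bQ_{\ge 0})$ with the same support (round the nonzero entries to nearby rationals), and since $|A'|\to|A|$ and $|(A')^k|\to|A^k|$, it suffices to prove the bound for rational $A$. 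Given rational $A$, pick a positive integer $Q$ clearing all denominators and set $B:=QA\in M_n(\bZ_{\ge 0})$; then $B$ is nilpotent (a scalar multiple of the nilpotent $A$), $M:=|B|=Q|A|$ is a positive integer, and $|A^k|=|B^k|/Q^k$, so the claim follows from $|B^k|\le (M/k)^k$ for integral $B$.

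Finally, for integral nilpotent $B\in M_n(\bZ_{\ge 0})$ with $|B|=M$, the entry $(B^k)_{ij}$ counts the length-$k$ directed walks $i\to\cdots\to j$ in the finite multidigraph $\Gamma$ with $B_{ij}$ edges from $i$ to $j$; and because $B$ is nilpotent, $\Gamma$ is acyclic, so every such walk is in fact a length-$k$ directed path. Hence $|B^k|$ is the number of length-$k$ paths in the $M$-edge DAG $\Gamma$, which \Cref{th:init} bounds by $(q+1)^rq^{k-r}$ where $M=kq+r$; and as observed after \Cref{cor:init} this is a product of $k$ non-negative integers with sum $M$, hence at most $(M/k)^k$ by AM--GM. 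Chaining the inequalities gives $|A^k|\le (N/k)^k$ in full generality, which together with the opening paragraph completes the three-way equality.

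There is no real obstacle here beyond \Cref{th:init} itself, which does the combinatorial work; the rest is approximation plus the adjacency-matrix/walk dictionary. The one point deserving care is the equivalence, for non-negative matrices, of nilpotence with acyclicity of the support — this is what keeps perturbations nilpotent in the density step and what turns walks into paths in the last step, and it is also the reason the ``no isolated vertices'' hypothesis of \cite[Conjecture 1.20]{ht-gr} plays no role. One could instead give a self-contained argument avoiding \Cref{th:init}, by a smoothing procedure that redistributes the weight of $A$ without decreasing $|A^k|$ until $A$ becomes the weighted path above, but invoking \Cref{th:init} is much shorter.
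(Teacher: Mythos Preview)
Your argument is correct, but it proceeds along a genuinely different route from the paper. The paper does not reduce to the integral case at all: instead it proves a general smoothing result (\Cref{th:opt}) valid for any closed submonoid $\cM\subseteq\bR_{\ge 0}$ and any pattern graph $E$, showing that the supremum of $\mathrm{ct}^E(\Gamma)$ is attained on graphs in which every two edges share a copy of $E$. Specialized to $E$ a length-$k$ path and $\cM=\bR_{\ge 0}$, this forces the optimal $\Gamma$ to be a single $\cM$-labeled $k$-path, whence the bound $(N/k)^k$ by AM--GM (\Cref{th:mon}, \Cref{cor:getcj}). In particular the paper's proof is independent of \Cref{th:init} and in fact re-derives it as the special case $\cM=\bZ_{\ge 0}$.

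Your approach instead treats \Cref{th:init} as a black box and lifts it to $\bR_{\ge 0}$ via density and homogeneity: approximate a nilpotent $A\in M_n(\bR_{\ge 0})$ by rational matrices with the same support (using that nilpotence of a non-negative matrix is a property of the support digraph), clear denominators to land in $M_n(\bZ_{\ge 0})$, and apply \Cref{th:init} together with AM--GM on the resulting integer product. This is shorter for the specific conjecture at hand and nicely isolates the combinatorial content in \Cref{th:init}, but it does not yield the broader framework the paper is after (arbitrary $\cM$, arbitrary $E$), which is what drives the applications in \Cref{subse:star}. Amusingly, the ``self-contained smoothing'' alternative you sketch in your last paragraph is precisely the paper's method.
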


Of course, the fact that the last two items are equal is nothing but the arithmetic-geometric-mean inequality. We confirm \Cref{cj:init,cj:alt} as a particular case of one of the main results of the present note (see \Cref{th:mon,cor:getcj}): 

\begin{theorem}
\Cref{cj:alt} holds. 
  \qedhere
\end{theorem}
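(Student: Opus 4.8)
The plan is to obtain \Cref{cj:alt} as the special case where the tree $E$ is a directed path of the paper's main structural result \Cref{th:mon}. Fix the monoid $\mathcal M=\bR_{\ge 0}$ and let $E$ be the directed path with $k$ edges (hence $k+1$ vertices). Given a nilpotent $A\in M_n(\bR_{\ge 0})$ of weight $|A|=N$, attach to it the $\bR_{\ge 0}$-labeled directed graph $\Gamma_A$ on $\{1,\dots,n\}$ with an edge $i\to j$ of label $A_{ij}$ whenever $A_{ij}>0$; nilpotence of $A$ is exactly acyclicity of $\Gamma_A$, and $\Gamma_A$ has weight $N$. The first step is the identity
\[
  |A^k|\;=\;\sum_{E\,\subset\,\Gamma_A}\bigl(\text{content of the copy}\bigr),
\]
which turns $|A^k|$ into the quantity \Cref{th:mon} maximizes: since $|A^k|=\sum_{i_0,\dots,i_k}\prod_{t=1}^k A_{i_{t-1}i_t}$ and a tuple contributing a nonzero term cannot repeat a vertex --- a repetition would produce a positive-weight closed walk, hence a positive diagonal entry of a power of $A$, contradicting nilpotence --- the surviving terms are indexed precisely by the length-$k$ directed paths in $\Gamma_A$, i.e.\ the copies of $E$, and their products are the contents.

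With this translation in place, the second step is to invoke \Cref{th:mon} (packaged as \Cref{cor:getcj}): among all weight-$N$ $\bR_{\ge 0}$-labeled acyclic graphs, the content-sum over copies of the $k$-edge path is maximized, and --- exactly in the spirit of \Cref{cor:init} --- the maximum equals the largest content of a single weight-$N$ labeled copy of $E$, i.e.\ the largest product $\prod_{t=1}^{k}x_t$ of $k$ nonnegative reals with $\sum_{t}x_t=N$. By the arithmetic-geometric-mean inequality this equals $\left(\frac Nk\right)^k$, attained at $x_1=\dots=x_k=N/k$. Hence $|A^k|\le\left(\frac Nk\right)^k$ for every admissible $A$, with equality for the $(k+1)\times(k+1)$ strictly upper triangular matrix all of whose superdiagonal entries equal $N/k$, whose graph is $E$ with every edge labeled $N/k$. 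This is the first bullet of \Cref{cj:alt}, and its equality with the remaining two is now immediate: the last two are the AM-GM inequality, and the first has just been shown to agree with the last.

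The real content is \Cref{th:mon} itself, and that is where I expect the difficulty. In the path case one would argue variationally: a content-maximizing weight-$N$ labeled acyclic graph may first be cut down to finitely many vertices; then (i) any vertex lying on no copy of $E$ can be deleted and its incident weight reinvested along an existing copy of $E$, strictly raising the content-sum unless no such vertex exists; (ii) once every vertex lies on some copy of $E$, one shows the graph can be ``straightened'' to a single thread on $k+1$ vertices without decreasing the content-sum, by successively merging or rerouting around branch points; and (iii) on that thread, with the underlying graph fixed, the content-sum is the product of its $k$ labels under the single constraint $\sum_t x_t\le N$, so AM-GM pins each label at $N/k$. The hard part will be making (i)--(ii) rigorous --- in particular bounding the size of a maximizer, and treating branch points uniformly for a general tree $E$ rather than only a path --- which is exactly what \Cref{th:mon} provides; here I am only citing it.
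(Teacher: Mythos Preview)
Your proof is correct and takes the same route as the paper: specialize \Cref{th:mon} to $\cM=\bR_{\ge 0}$ and then use AM--GM to pin the common maximum at $(N/k)^k$. The extra detail on the translation $A\mapsto\Gamma_A$ and on achievability is harmless (both are already built into \Cref{th:mon}), though the parenthetical ``packaged as \Cref{cor:getcj}'' is self-referential---that is the very statement you are proving---and the final speculative paragraph on how one might prove \Cref{th:mon} is not part of this argument and should be dropped.
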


After a short introduction to the terminology and conventions in \Cref{se.prel} we prove \Cref{th:opt}, stating that given a closed additive submonoid $\cM$ of $\bR$ and a directed graph $E$, the supremum of
\begin{equation*}
  \sum_{\text{copies of $E$contained in $\Gamma$}} \text{product of labels of the edges of $E$}
\end{equation*}
as $\Gamma$ ranges over the $\cM$-labeled directed graphs equals the analogous supremum over only those $\Gamma$ for which every two edges lie on a common copy of $E\subset \Gamma$.

This then recovers \Cref{th:init}, proves \Cref{cj:alt}, and can be used to count the maximal number of $a$-arm stars in a directed graph with $N$ edges (\Cref{cor:1starcount}).  

\subsection*{Acknowledgements}

This work was partially supported by NSF grant DMS-1801011. 

I am grateful for input from P.M. Hajac and M. Tobolski.

\section{Preliminaries}\label{se.prel}

All graphs under discussion are finite and directed, so we often drop these adjectives. As in the Introduction, we abbreviate the phrase `directed acyclic graph' (i.e. one without oriented cycles of any length, including single-edge loops) as `DAG'.

\begin{definition}\label{def:lab}
  Let $\cM$ be a set with a distinguished symbol `$0$'. An {\it $\cM$-labeled} directed graph is a {\it simple} directed graph (i.e. no repeated edges) for which every pair $(x,y)$ of vertices carries a label $\ell(x,y)\in M$, with label $0$ precisely when $(x,y)$ is not and edge.

  Plain directed graphs, possibly with repeated edges, can be alternatively regarded as $\bZ_{\ge 0}$-labeled directed graphs {\it without} repeated edges, with $(x,y)$ carrying the label $m\in \bZ_{\ge 0}$ if there are $m$ edges $x\to y$.

  The {\it adjacency matrix} of an $\cM$-labeled graph on the vertex set $I$ is the $\cM$-valued matrix whose $(i,j)$ entry (for $i,j\in I$) is $\ell(x,y)$.  
\end{definition}

\begin{definition}\label{def:ct}
    If $\cM\subseteq \bR_{\ge 0}$ the {\it content} of an $\cM$-labeled graph $\Gamma$ is
  \begin{equation*}
    \mathrm{ct}~\Gamma:= \prod_{\text{edges }(x,y)} \ell(x,y)
  \end{equation*}
  and its {\it weight} is 
    \begin{equation*}
    \mathrm{wt}~\Gamma:= \sum_{\text{edges }(x,y)} \ell(x,y)
  \end{equation*}

  Similarly, if $S$ is a set of edges in $\Gamma$, the {\it $S$-exclusive content} of $\Gamma$ is
    \begin{equation*}
    \mathrm{ct}_{\not S}~\Gamma:= \prod_{\text{edges }(x,y)\not\in S} \ell(x,y). 
  \end{equation*}

\end{definition}

With all of this in place, \Cref{th:init,cor:init} (which in turn paraphrase \cite[Theorem 1.10 and Corollary 1.19]{ht-gr}) can be conjoined as

\begin{theorem}\label{th:init-all}
  Let $N$ and $k$ be two positive integers, and $N=kq+r$ be the decomposition of $N$ modulo $k$. The following quantities all admit the same maximal value $(q+1)^r q^{k-r}$.
  \begin{itemize}
  \item the number of length-$k$ paths in an $N$-edge DAG;
  \item the sum
    \begin{equation*}
      \sum_{\text{length-}k\text{ paths in }\Gamma}\mathrm{ct}(\text{path})
    \end{equation*}
    for $\bZ_{\ge 0}$-labeled DAGs $\Gamma$;
  \item the weight of $A^k$, where $A$ is a square $\bZ_{\ge 0}$-valued matrix of weight $N$;
  \item the product of $k$ non-negative integers with sum $N$. 
  \end{itemize}
  \qedhere
\end{theorem}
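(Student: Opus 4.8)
The plan is to derive \Cref{th:init-all} as a corollary of the general optimization result \Cref{th:opt} (applied with $\cM=\bZ_{\ge 0}$ and with $E$ the directed path of length $k$), together with the elementary combinatorial observation about balanced tuples already recorded after \Cref{cor:init}. First I would unpack why the four quantities listed are pairwise equal, proceeding around the cycle. The equality of the last two — ``the product of $k$ non-negative integers with sum $N$'' and the closed-form $(q+1)^r q^{k-r}$ — is exactly the pairing/swapping argument in the Introduction: whenever two entries of a candidate tuple differ by at least $2$, replacing $(a,b)$ by $(a-1,b+1)$ strictly increases the product, so the maximum is attained at the best-balanced tuple, whose product is $(q+1)^r q^{k-r}$.

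Next I would identify the ``weight of $A^k$'' quantity with the ``sum of contents of length-$k$ paths'' quantity. Writing $A$ for the adjacency matrix of a $\bZ_{\ge 0}$-labeled DAG $\Gamma$ on vertex set $I$, the matrix entry $(A^k)_{i_0 i_k}$ expands as $\sum_{i_1,\dots,i_{k-1}} \prod_{t=0}^{k-1} A_{i_t i_{t+1}}$, so $|A^k|=\sum_{i_0,\dots,i_k}\prod_t A_{i_t i_{t+1}}$, which is precisely $\sum_{\text{length-}k\text{ walks } w}\mathrm{ct}(w)$ where I allow degenerate walks (repeated vertices). Because $\Gamma$ is acyclic, every walk contributing a nonzero term is in fact an embedded length-$k$ path, so this equals $\sum_{\text{length-}k\text{ paths}}\mathrm{ct}(\text{path})$. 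The correspondence between $\bZ_{\ge 0}$-valued square matrices of weight $N$ and $\bZ_{\ge 0}$-labeled DAGs of weight $N$ is built into \Cref{def:lab,def:ct}, and nilpotence is equivalent to acyclicity; so these two maxima coincide. Similarly, for the ``number of length-$k$ paths in an $N$-edge DAG'': given an ordinary (possibly multi-edge) DAG $G$ with $N$ edges, its $\bZ_{\ge 0}$-labeled collapse $\Gamma$ has weight $N$, and a length-$k$ path in $G$ projects to a length-$k$ path in $\Gamma$ with multiplicity equal to the product of the edge-labels along it; hence the number of length-$k$ paths in $G$ equals $\sum_{\text{paths}}\mathrm{ct}(\text{path})$ for $\Gamma$. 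Conversely any $\bZ_{\ge 0}$-labeled DAG lifts to an ordinary DAG with the same edge count and path count. So all four maxima agree, and it remains only to compute the common value.

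For the computation I invoke \Cref{th:opt}: the supremum of $\sum_{\text{copies of }E\subset\Gamma}\mathrm{ct}_{\not{\{\text{non-}E\text{ edges}\}}}$ — here, since $E$ is the length-$k$ path and we take the full content of each copy, the sum $\sum_{\text{length-}k\text{ paths}}\mathrm{ct}(\text{path})$ — over all $\bZ_{\ge 0}$-labeled graphs $\Gamma$ of weight $N$ equals the same supremum restricted to those $\Gamma$ in which every two edges lie on a common copy of $E$. For $E$ a path of length $k$, a graph with that property and with all edges actually used by some length-$k$ path must itself be a single length-$k$ path (any branching or any edge off a common path would violate ``every two edges lie on a common length-$k$ path''); a short case analysis confirms the extremal $\Gamma$ is exactly the $k$-edge path with labels $a_1,\dots,a_k$ summing to $N$, contributing content $a_1\cdots a_k$. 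Optimizing over such labelings in $\bZ_{\ge 0}$ is the product-of-$k$-integers-with-sum-$N$ problem, whose maximum is $(q+1)^r q^{k-r}$. Chaining these identifications gives the theorem.

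The main obstacle is the structural step: rigorously arguing that, once \Cref{th:opt} lets us restrict attention to graphs in which every pair of edges lies on a common length-$k$ path, the only candidates (up to deleting edges that do not help) are single length-$k$ paths. One must be careful that two distinct length-$k$ paths sharing this ``every two edges cohabit a path'' property cannot coexist — e.g. ruling out configurations like two paths sharing a long middle segment but diverging at the ends, which at first glance might seem to satisfy the pairwise condition but in fact do not, since an edge from one end-piece and an edge from the other end-piece need not lie on any single length-$k$ path. Handling these configurations cleanly, rather than by unwieldy casework, is where the argument needs the most care; I expect it to reduce to observing that the ``cohabitation'' relation forces a linear order on the edges compatible with a single path.
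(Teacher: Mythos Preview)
Your plan is the paper's: \Cref{th:init-all} is treated as the $\cM=\bZ_{\ge 0}$ instance of \Cref{th:mon}, and the latter is proved exactly by combining \Cref{th:opt} with the structural classification you isolate as the ``main obstacle''. That classification is \Cref{le:allonpth} in the paper. Your version of it, however, is wrong as stated. Over arbitrary directed graphs, the $\Gamma$ satisfying \Cref{eq:all2} for $E$ a length-$k$ path are \emph{not} only length-$k$ paths: directed cycles of any length $m$ with $k+1\le m\le 2k-1$ also qualify (any two edges of such a cycle lie on one of its $m$ length-$k$ subpaths), and these can strictly beat the best path --- for $k=2$, $N=3$ the unit-labeled $3$-cycle has $\mathrm{ct}^E=3>2=(q+1)^rq^{k-r}$. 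So the branching-path casework you anticipate is not where the difficulty lies; the real content of \Cref{le:allonpth} is accounting for cycles.

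What rescues your argument is the DAG hypothesis, which you invoke earlier but drop when you write ``over all $\bZ_{\ge 0}$-labeled graphs $\Gamma$''. The edge-merging move in the proof of \Cref{th:opt} only deletes edges and relabels a surviving one, so it cannot create a cycle; hence \Cref{th:opt} holds verbatim with ``DAG'' in place of ``directed graph''. The extremal DAG then satisfies \Cref{eq:all2}, and among DAGs the cycle alternatives of \Cref{le:allonpth} are excluded, leaving only the length-$k$ path as you want. Make that restriction explicit and the proof goes through exactly as the paper's does.
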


The fact that the first two optimization problems are identical follows immediately by recasting plain DAGs as labeled DAGs as in \Cref{def:lab}. On the other hand, translating this into the language of the third item is simply passing between a DAG and its adjacency matrix.

With this phrasing, \Cref{th:mon} below provides a direct generalization of \Cref{th:init-all}. 

\section{Optimizing labeled graphs}\label{se.main}

\Cref{th:init-all} and the results mentioned in the discussion preceding it are concerned with counting {\it paths} in a directed graph. We will first prove a general principle applicable to optimization problems of this general form, with general directed graphs in place of paths.

Specifically, let $E$ be a fixed finite {\it simple} directed graph (i.e. without repeated edges or loops), that will play the same role as a length-$k$ path did above.  Let also $(\cM,+)$ be a closed submonoid of $(\bR_{\ge 0},+)$.

\begin{definition}\label{def:sumct}
  Let $\Gamma$ be an $\cM$-labeled directed graph. We define
  \begin{equation*}
    \mathrm{ct}^E(\Gamma) = \mathrm{ct}_{\cM}^E(\Gamma):=\sum_{\alpha} \mathrm{ct}(\alpha), 
  \end{equation*}
  with $\alpha$ ranging over the subgraphs of $\Gamma$ isomorphic to $E$, with the $\cM$-labeling inherited from $\Gamma$.
\end{definition}

We then have

\begin{theorem}\label{th:opt}
  Let
  \begin{itemize}
  \item  $E$ be a simple directed graph;
  \item $(\cM,+)$ a closed submonoid of $(\bR_{\ge 0},+)$;
  \item $N\in \cM$ an element.   
  \end{itemize}
  Then, $\sup~ \mathrm{ct}^E_{\cM}(\Gamma)$ for $\cM$-labeled $\Gamma$ of weight $N$ is achieved over graphs $\Gamma$ with the following property:
  \begin{equation}\label{eq:all2}
    \text{Every two edges of }\Gamma\text{ belong to some common embedded copy }E\subset \Gamma. 
  \end{equation}
\end{theorem}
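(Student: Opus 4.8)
The strategy is to start from an arbitrary $\cM$-labeled graph $\Gamma$ of weight $N$ and show that one can modify it, without decreasing $\mathrm{ct}^E(\Gamma)$ and without changing the weight, until property~\eqref{eq:all2} holds. The natural first move is to discard all edges of $\Gamma$ that lie on no embedded copy of $E$: such an edge contributes to no term $\mathrm{ct}(\alpha)$, so deleting it leaves $\mathrm{ct}^E$ unchanged while freeing up some weight $w\ge 0$; we then redistribute $w$ onto an existing edge that \emph{does} lie on a copy of $E$ (if $\mathrm{ct}^E(\Gamma)>0$ there is at least one), which can only increase $\mathrm{ct}^E$ since every $\mathrm{ct}(\alpha)$ is monotone in each edge label. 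So we may assume every edge of $\Gamma$ lies on some copy of $E$.

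The remaining obstruction is that two edges $e_1,e_2$ might each lie on a copy of $E$ but on no \emph{common} copy. The key idea is to define an equivalence-type relation on edges: say $e\sim e'$ if they lie on a common embedded $E$, and consider the connected components of the resulting graph-on-edges (or rather, close it up so that edges sharing a copy of $E$ are grouped — note $\sim$ need not be transitive, so one passes to the transitive closure, giving "clusters" of edges). Each embedded copy of $E$ lies entirely within one cluster, so $\mathrm{ct}^E(\Gamma)$ is the \emph{sum} of the within-cluster quantities $\mathrm{ct}^E(\Gamma_c)$ over clusters $c$, where $\Gamma_c$ is the subgraph spanned by the edges of cluster $c$. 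If $w_c$ is the weight of $\Gamma_c$, then $\sum_c w_c = N$. By restricting to a single cluster we do not immediately get property~\eqref{eq:all2}, but we do get a graph in which every pair of edges lies in a common "chain" of copies of $E$; the honest claim \eqref{eq:all2} is strictly stronger and is presumably what the subsequent argument (scaling within a cluster, plus a further surgery) delivers.

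Concretely, the plan for the cluster step is: among the clusters, pick one, say $c^\*$, maximizing the "efficiency" $\mathrm{ct}^E(\Gamma_c)/$(something homogeneous of the right degree in $w_c$) — because $\mathrm{ct}$ of a copy of $E$ is a product of $|E|$ labels, $\mathrm{ct}^E(\Gamma_c)$ scales like $\lambda^{|E|}$ under $\lambda$-scaling of all labels in $\Gamma_c$. Then replace $\Gamma$ by a single scaled copy of $\Gamma_{c^\*}$, rescaled to total weight $N$: the degree-$|E|$ homogeneity together with the maximality of the chosen efficiency shows this does not decrease $\mathrm{ct}^E$. This reduces to a connected cluster of full weight $N$. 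The final reduction from "cluster" to the literal statement \eqref{eq:all2} should again be a local surgery: if $e_1,e_2$ lie on copies $E_1,E_2$ meeting in a third edge $e_3$ (the next link of the chain), one argues that the labels can be pushed around — or edges identified/merged — to put $e_1,e_2$ on a common copy, using monotonicity and homogeneity to control the effect on $\mathrm{ct}^E$.

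**Main obstacle.** The delicate point is the last reduction: the relation "share a copy of $E$" is not transitive, and it is not obvious that one can always perform a surgery that merges two copies of $E$ into configurations satisfying \eqref{eq:all2} while provably not decreasing $\mathrm{ct}^E$. The homogeneity/monotonicity bookkeeping (keeping total weight equal to $N$ throughout, and tracking how $\mathrm{ct}^E$ changes under each local move) is where the real work lies; in particular one must be careful that merging vertices or edges of $E_1$ and $E_2$ does not accidentally create new, spurious copies of $E$ with negative net effect, or destroy the simplicity of the graph required by the setup. I expect the argument will hinge on choosing the surgery so that the set of copies of $E$ is controlled — e.g.\ only the "intended" copies survive — which may require a careful compactness or extremal argument (taking a $\Gamma$ that is optimal, or nearly so, and deriving \eqref{eq:all2} by contradiction from a productive local move).
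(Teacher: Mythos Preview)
Your plan has a genuine gap, and it stems from overcomplicating what turns out to be a one-move argument. First, the ``pick the most efficient cluster and rescale to weight $N$'' step does not respect the hypothesis on $\cM$: for $\cM=\bZ_{\ge 0}$ (the very case you want for \Cref{th:init-all}) the scaling factor $N/w_{c^*}$ need not be an integer, so the rescaled graph is no longer $\cM$-labeled. Homogeneity arguments of this kind only work when $\cM=\bR_{\ge 0}$. Second, even granting that, you correctly flag the reduction from ``single cluster'' to the literal condition \Cref{eq:all2} as the main obstacle, and you do not resolve it; the suggested vertex/edge-merging surgeries are exactly where uncontrolled new copies of $E$ would appear.

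The point you are missing is that your very first move already does the whole job once applied more broadly. You observe that an edge lying on \emph{no} copy of $E$ can have its label transferred to any useful edge without decreasing $\mathrm{ct}^E$. The paper's argument is simply: do the same for \emph{any} pair $e,f$ that lie on no \emph{common} copy of $E$. Because $\cS_e\cap\cS_f=\varnothing$, the function $\mathrm{ct}^E(\Gamma)$ is separately linear in $\ell(e)$ and $\ell(f)$, with coefficients $\Sigma_e=\sum_{\alpha\in\cS_e}\mathrm{ct}_{\not e}\alpha$ and $\Sigma_f$; if $\Sigma_e\ge\Sigma_f$, deleting $f$ and replacing $\ell(e)$ by $\ell(e)+\ell(f)$ changes $\mathrm{ct}^E$ by $\ell(f)(\Sigma_e-\Sigma_f)\ge 0$, stays inside $\cM$ (only addition is used), keeps the weight equal to $N$, and strictly decreases the number of edges. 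Iterating terminates at a $\Gamma$ satisfying \Cref{eq:all2}. No clusters, no rescaling, no surgery on overlapping copies of $E$ is needed.
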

\begin{proof}
We have to prove that given an $\cM$-labeled $\Gamma$ of content $N$, $\mathrm{ct}^E_{\cM}$ can be improved by altering $\Gamma$ progressively until we achieve \Cref{eq:all2}. 

Indeed, suppose the edges $e$ and $f$ of $\Gamma$ do {\it not} belong to a common copy $E\subset \Gamma$. Then, the sets $\cS_e$ and $\cS_f$ of $E$-subgraphs of $\Gamma$ containing $e$ and $f$ respectively are disjoint.

  Now, for each path $\alpha\in \cS_e$ containing $e$, consider the $e$-exclusive content $\mathrm{ct}_{\not e}~\alpha$ as in \Cref{def:ct}, and similarly for $f$. Without loss of generality, suppose
  \begin{equation*}
    \Sigma_e:= \sum_{\alpha\in \cS_e}\mathrm{ct}_{\not e}~\alpha
  \end{equation*}
  is at least as large as its counterpart
  \begin{equation*}
    \Sigma_f:=\sum_{\beta\in \cS_f}\mathrm{ct}_{\not f}~\beta.
  \end{equation*}
  We can then eliminate edge $f$ and recycle its label into $e$, updating $\ell(e)$ to $\ell(e)+\ell(f)$. This modification of the graph will
  \begin{itemize}
  \item not decrease $\mathrm{ct}^E_{\cM}$; indeed, the latter is incremented by
    \begin{equation*}
      \ell(f)\left(\Sigma_e-\Sigma_f\right)\ge 0.
    \end{equation*}
  \item decrease the number of pairs of edges that do {\it not} belong to the same $E\subset \Gamma$. 
  \end{itemize}

  We can continue the process so long as there are such pairs of edges, so the procedure concludes precisely when we have obtained a graph satisfying \Cref{eq:all2}. This finishes the proof.
\end{proof}

\subsection{Paths}\label{subse:path}

\Cref{th:opt} has a number of consequences germane to the problems discussed in the introduction. The present subsection focuses on the case where the graph $E$ is a path, hence the relevance of the following simple observation. 

\begin{lemma}\label{le:allonpth}
  Let $k$ be a positive integer and $E$ a length-$k$ oriented path. Then, the only directed graphs $\Gamma$ satisfying \Cref{eq:all2} are length-$k$ paths and cycles any of the lengths $k+1$ up to $2k-1$.
  \qedhere
\end{lemma}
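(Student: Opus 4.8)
The plan is to characterize exactly which directed graphs $\Gamma$ have the property \Cref{eq:all2} when $E$ is a length-$k$ oriented path, i.e.\ that every two edges of $\Gamma$ lie on a common embedded directed path of length $k$. First I would record the easy direction: a length-$k$ directed path obviously satisfies \Cref{eq:all2} (vacuously, since every pair of its edges lies on the whole path), and so does any directed cycle $C_m$ of length $m$ with $k+1 \le m \le 2k-1$. For the cycle, given two edges $e,f$ on $C_m$, one can start reading the cycle just before $e$ and traverse $m-1 < k$... wait, $m-1$ can exceed $k$; rather, the point is that the two arcs of $C_m \setminus \{e,f\}$ (or more precisely, any interval of the cyclic order containing both $e$ and $f$ but not all of the cycle) have length at most $m-1 \le 2k-2$, and among the two complementary arcs joining $e$ and $f$ around the cycle, the shorter-plus-longer decomposition lets us pick a sub-path of $C_m$ of length exactly $k$ through both $e$ and $f$; this needs $m \le 2k-1$ so that the ``long way around'' through both edges has length $\le m \le 2k-1$, hence contains a length-$k$ sub-path still covering $e$ and $f$, and needs $m \ge k+1$ so that a length-$k$ sub-path of $C_m$ is actually an embedded (non-repeating) path. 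I would spell this arc-counting out carefully since it is where the precise bounds $k+1$ and $2k-1$ enter.

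Next, the harder direction: any $\Gamma$ satisfying \Cref{eq:all2} is on that list. Here I would argue as follows. Since every pair of edges — in particular every single edge — lies on an embedded length-$k$ path, $\Gamma$ has at least $k$ edges, and every edge has in-degree and out-degree constraints forced by sitting inside a path. Pick any embedded length-$k$ path $P \subseteq \Gamma$; I claim $\Gamma$ has no edge outside $P$ beyond possibly one extra edge closing $P$ into a cycle. Suppose $e$ is an edge not on $P$. Taking $f$ to be the first edge of $P$, the pair $\{e,f\}$ lies on some length-$k$ embedded path $Q$; since $Q$ has only $k$ edges and meets $P$, one analyzes how $Q$ and $P$ overlap. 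The combinatorial heart is that the union of two length-$k$ embedded paths sharing an edge is very constrained: it is either a single path of length $\le 2k-1$, or a ``theta''-type or cycle configuration. Iterating over all edges and using that \emph{every} pair must be co-path-able, one rules out branching (a vertex of total degree $\ge 3$ in the underlying structure forces two edges that cannot lie on a common embedded path, because an embedded path through a branch vertex uses only two of its incident edges and the length budget $k$ cannot reach a third edge hanging off the far end), and one rules out two or more ``chords,'' leaving only a path or a single cycle. Finally, a cycle $C_m$ satisfying \Cref{eq:all2} must have $m \ge k+1$ (else it contains no embedded length-$k$ path at all — a length-$k$ sub-walk of $C_m$ with $m \le k$ repeats a vertex) and $m \le 2k-1$ (if $m \ge 2k$, take $e,f$ antipodal-ish so that both arcs between them have length $\ge k$; then no length-$k$ embedded sub-path of $C_m$ — which is just an arc of $C_m$ of length $k$ — can contain both $e$ and $f$).

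The main obstacle I expect is the branching/chord elimination step: making rigorous the claim that a vertex with three or more incident edges (in the appropriate directed sense) always produces an offending pair. The clean way is probably to pass to the underlying undirected multigraph, observe that an embedded directed length-$k$ path is in particular an embedded undirected path, and then prove the purely combinatorial statement: \emph{if every two edges of a finite undirected multigraph $G$ lie on a common embedded path of length $k$, then $G$ is a path of length $k$ or a cycle of length between $k+1$ and $2k-1$} — after which the directedness is a free restriction (one checks the surviving candidates are exactly the directed path and directed cycles, not e.g.\ undirected cycles with a reversed arc, since those do not even admit one embedded directed length-$k$ path covering a given edge adjacent to the reversal). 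I would handle the undirected statement by a minimal-counterexample or direct degree argument: total edge count is at most $2k-1$ (take $e,f$ on a length-$k$ path realizing the max pairwise ``distance''), there are at most two vertices of degree $\ne 2$, etc. Once those structural facts are pinned down, listing the graphs is immediate.
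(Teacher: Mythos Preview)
The paper offers no proof of this lemma at all; it is stated as a ``simple observation'' and closed with a \texttt{\textbackslash qedhere}. So there is nothing to compare against, and any correct argument you supply is already more than the paper provides.

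That said, your proposed strategy contains a real flaw. The plan to pass to the underlying undirected graph and prove the undirected analogue --- ``if every two edges of a finite undirected graph lie on a common embedded path of length $k$, then the graph is a length-$k$ path or a cycle of length between $k{+}1$ and $2k{-}1$'' --- does not work, because that undirected statement is false. For $k=2$, the $3$-star (a center $v$ joined to three leaves $a,b,c$) has every pair of edges lying on a common undirected length-$2$ path, yet it is neither a path nor a cycle.

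The directedness is doing essential work here, and once you use it the argument is much shorter than what you sketch. An embedded directed path has in-degree and out-degree at most $1$ at every vertex. Hence if some vertex $v$ of $\Gamma$ has two outgoing edges $v\to a$ and $v\to b$, no embedded directed path can contain both; likewise for two incoming edges at $v$. Condition \Cref{eq:all2} therefore forces every vertex of $\Gamma$ to have in-degree $\le 1$ and out-degree $\le 1$, so $\Gamma$ is a disjoint union of directed paths and directed cycles. Two edges in distinct components share no embedded path, so $\Gamma$ is connected: a single directed path or a single directed cycle. Your length bookkeeping then finishes the job --- a path must have length exactly $k$ (shorter contains no copy of $E$; longer has its first and last edges on no common length-$k$ subpath), and a cycle $C_m$ must satisfy $k{+}1\le m\le 2k{-}1$ for exactly the reasons you give.
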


\begin{theorem}\label{th:mon}
  Let $k$ be a positive integer, $(\cM,+)$ a closed submonoid of $(\bR_{\ge 0},+)$ and $N\in \cM$ an element. The following quantities all admit the same maximal value.
  \begin{enumerate}[(1)]
  \item\label{item:pths} the sum
    \begin{equation*}
      \sum_{\text{length-}k\text{ paths in }\Gamma}\mathrm{ct}(\text{path})
    \end{equation*}
    for $\cM$-labeled DAGs $\Gamma$ of weight $N$;
  \item\label{item:mtrx} the weight of $A^k$, where $A$ is a square $\cM$-valued matrix of weight $N$;
  \item\label{item:nr} the product of $k$ non-negative elements of $\cM$ with sum $N$. 
  \end{enumerate}  
\end{theorem}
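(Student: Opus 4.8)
The plan is to identify quantity~(1) with quantity~(2) essentially by definition, and quantity~(1) with quantity~(3) by feeding the length-$k$ directed path~$E$ into \Cref{th:opt} and \Cref{le:allonpth}.

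\emph{Quantities (1) and (2).} The adjacency-matrix construction of \Cref{def:lab} is a weight-preserving bijection between $\cM$-labeled DAGs and nilpotent $\cM$-valued square matrices: a directed cycle of length $\ell$ forces $(A^{\ell})_{ii}>0$ for some $i$ — there is no cancellation among non-negative entries — hence non-nilpotence, while a DAG on $n$ vertices has no length-$n$ walk, so its adjacency matrix $A$ satisfies $A^n=0$. If $\Gamma$ is an $\cM$-labeled DAG with adjacency matrix $A$, acyclicity makes every length-$k$ walk in $\Gamma$ a length-$k$ path, so $(A^k)_{ij}$ is the sum of $\mathrm{ct}(\alpha)$ over length-$k$ paths $\alpha$ from $i$ to $j$; summing over $i,j$ gives $|A^k|=\mathrm{ct}^E_{\cM}(\Gamma)$. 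Hence (1) and (2) are literally the same optimization problem, provided (2) is read — as in \Cref{cj:init} — with $A$ ranging over \emph{nilpotent} matrices; without that restriction the supremum is infinite (take $A=(N)$), so this reading is forced.

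\emph{Quantities (1) and (3).} For ``$(1)\ge(3)$'': the maximum of $a_1\cdots a_k$ over $(a_1,\dots,a_k)\in\cM^k$ with $a_1+\dots+a_k=N$ is attained, the constraint set being compact because $\cM$ is closed. If that maximum is positive it is realized by a tuple with all entries positive, and the length-$k$ path carrying these labels is an $\cM$-labeled DAG of weight $N$ whose only embedded copy of $E$ is itself, so its $\mathrm{ct}^E_{\cM}$ equals the maximum; and if the maximum is $0$ then $(3)=0\le(1)$ trivially. For ``$(1)\le(3)$'': the relabeling step in the proof of \Cref{th:opt} — delete an edge $f$ and raise $\ell(e)$ to $\ell(e)+\ell(f)$ for some other edge $e$ — creates no new directed cycle and preserves the total weight, so it carries $\cM$-labeled DAGs of weight $N$ to $\cM$-labeled DAGs of weight $N$. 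Running that proof inside the class of $\cM$-labeled DAGs therefore shows that any such $\Gamma$ admits an $\cM$-labeled DAG $\Gamma'$ of weight $N$ satisfying \Cref{eq:all2} with $\mathrm{ct}^E_{\cM}(\Gamma')\ge\mathrm{ct}^E_{\cM}(\Gamma)$. By \Cref{le:allonpth}, such a $\Gamma'$ is a length-$k$ path or a cycle of length between $k+1$ and $2k-1$; the cyclic options are not acyclic, so $\Gamma'$ is a length-$k$ path, and $\mathrm{ct}^E_{\cM}(\Gamma')$ is the product of its $k$ edge labels, which are positive elements of $\cM$ summing to $N$ — hence at most quantity~(3). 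The two inequalities give $(1)=(3)$, which together with the previous paragraph proves the theorem.

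The one load-bearing point is that the edge-merging move of \Cref{th:opt} preserves acyclicity (deleting an edge and raising another edge's label plainly creates no cycle), so that the whole argument can be run inside the class of DAGs; this restriction is precisely what discards the cyclic extremizers permitted by \Cref{le:allonpth} and pins the extremal DAG down to a single length-$k$ path. I would stress that this is the entire content of the theorem rather than a technicality: cycles genuinely outperform paths — already for $k=2$ and $\cM=\bR_{\ge 0}$ the $3$-cycle with all labels $N/3$ yields $\mathrm{ct}^E_{\cM}=\frac{N^2}{3}>\frac{N^2}{4}$ — and the assertion is exactly that DAGs of weight $N$ cannot realize such values.
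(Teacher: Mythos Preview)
Your proof is correct and follows the paper's approach: identify (1) with (2) via adjacency matrices, then prove $(1)=(3)$ by feeding the length-$k$ path into \Cref{th:opt} and \Cref{le:allonpth}. One small slip: your parenthetical ``the supremum is infinite (take $A=(N)$)'' is wrong, since $|A^k|=N^k$ is finite; but $N^k>(N/k)^k$ for $k\ge 2$, so your conclusion that nilpotence must be read into item~(2) (as in \Cref{cj:init,cj:alt}) is still correct.

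Where your write-up goes beyond the paper is in making explicit that the edge-merging move in the proof of \Cref{th:opt} preserves acyclicity, so that the whole reduction can be run inside the class of DAGs. The paper's own proof simply asserts that \Cref{th:opt} together with \Cref{le:allonpth} forces the extremizer to be a length-$k$ path, without saying why the cyclic options listed in \Cref{le:allonpth} are excluded; your final paragraph, with the $3$-cycle example showing cycles genuinely beat paths, pins down exactly why this step matters.
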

\begin{proof}
  The fact that \Cref{item:pths,item:mtrx} have the same optimal value follows by noting that if $A$ is the adjacency matrix of the labeled DAG $\Gamma$ then the length-$k$ paths in $\Gamma$ are in bijection with the non-zero entries of $A$, and those entries are precisely the contents of the respective paths.

  It thus remains to argue that the common maximal value of \Cref{item:pths,item:mtrx} also equals that of \Cref{item:nr}. This entails proving two inequalities:

  \begin{equation}\label{eq:31}
    \max~ \Cref{item:nr}\le \max ~ \Cref{item:pths}
  \end{equation}
and 
  \begin{equation}\label{eq:13}
    \max~ \Cref{item:pths}\le \max ~ \Cref{item:nr}.
  \end{equation}

  \Cref{eq:31} is easier to prove: simply note that every $k$-tuple of elements in $\cM$ can be realized as the $k$ labels of a length-$k$ path $\Gamma$.

  As for \Cref{eq:13}, \Cref{th:opt} applied to a $k$-path $E$ and \Cref{le:allonpth} imply that the maximum is achieved by an $\cM$-labeled length-$k$ path, and the labels of its $k$ edges will be the $k$ elements in \Cref{item:nr}.
\end{proof}

\begin{corollary}\label{cor:getcj}
\Cref{cj:alt} holds.   
\end{corollary}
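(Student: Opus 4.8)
The plan is to deduce \Cref{cor:getcj} directly from \Cref{th:mon} applied with the monoid $\cM=\bR_{\ge 0}$, which is indeed a closed additive submonoid of $(\bR_{\ge 0},+)$. With this choice, the three quantities in \Cref{th:mon} coincide, and it remains only to identify them with the data appearing in \Cref{cj:alt} and to evaluate the common maximum.

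First I would observe that item \Cref{item:mtrx} of \Cref{th:mon}, namely $\sup\{\mathrm{wt}(A^k)\}$ over $\cM$-valued square matrices $A$ of weight $N$, is not literally the quantity in \Cref{cj:alt}, which ranges over \emph{nilpotent} such matrices. However, \Cref{th:mon} routes the optimum of \Cref{item:mtrx} through \Cref{item:pths} (sums of contents of length-$k$ paths in $\cM$-labeled DAGs of weight $N$), and a DAG has a nilpotent adjacency matrix; moreover \Cref{th:opt} together with \Cref{le:allonpth} shows the supremum is attained on a single length-$k$ path, whose adjacency matrix is visibly nilpotent. Hence the supremum over all matrices of weight $N$ equals the supremum over nilpotent ones, so \Cref{item:mtrx} does compute $\sup\{|A^k| : A\text{ nilpotent},\ |A|=N\}$ — the first bullet of \Cref{cj:alt}. (One should note in passing that $|\cdot|$ in the Introduction is the same as $\mathrm{wt}$ of \Cref{def:ct} via the adjacency-matrix dictionary of \Cref{def:lab}.)

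Next I would match item \Cref{item:nr} of \Cref{th:mon} with the second bullet of \Cref{cj:alt}: since $\cM=\bR_{\ge 0}$, ``the product of $k$ non-negative elements of $\cM$ with sum $N$'' is exactly ``the product of $k$ non-negative reals with sum $N$.'' Finally, to complete the identification with the third bullet $(N/k)^k$, I would invoke the arithmetic–geometric mean inequality: for non-negative reals $x_1,\dots,x_k$ with $\sum x_i=N$ one has $\prod x_i\le (N/k)^k$, with equality when all $x_i=N/k$ (which lies in $\bR_{\ge 0}$, so the optimum is achieved). Stringing these equalities together,
\begin{equation*}
  \sup\{|A^k|:A\text{ nilpotent},\ |A|=N\}=\max\Big\{\textstyle\prod_{i=1}^k x_i : x_i\ge 0,\ \sum x_i=N\Big\}=\left(\frac Nk\right)^k,
\end{equation*}
which is precisely \Cref{cj:alt}, and hence \Cref{cj:init} as well.

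I do not anticipate a genuine obstacle here: the content of the result is entirely in \Cref{th:opt}/\Cref{th:mon}, and this corollary is a matter of unwinding definitions plus AM–GM. The one point deserving a sentence of care is the nilpotence clause — confirming that restricting \Cref{item:mtrx} to nilpotent matrices does not lower the supremum, which is immediate because the optimizing configuration furnished by \Cref{th:opt} and \Cref{le:allonpth} is a path and thus has nilpotent adjacency matrix. The boundary case $N=0$ is trivial (both sides are $0$), and no hypothesis about isolated vertices is needed, consistent with the \Cref{remark} following \Cref{cj:init}.
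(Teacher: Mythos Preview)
Your proposal is correct and follows the paper's own approach exactly: specialize \Cref{th:mon} to $\cM=\bR_{\ge 0}$ and invoke the AM--GM inequality to identify the maximum in \Cref{item:nr} as $(N/k)^k$. Your extra paragraph reconciling the nilpotence hypothesis in \Cref{cj:alt} with the phrasing of \Cref{item:mtrx} is more careful than the paper's one-line proof (which implicitly reads \Cref{item:mtrx} as ranging over nilpotent matrices via the DAG correspondence and says nothing further), but the route is the same.
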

\begin{proof}
  Simply take $\cM=\bR_{\ge 0}$ in \Cref{th:mon} and observe, as in the Introduction, that the maximal value in \Cref{item:nr} is achieved when all labels are equal to $\frac Nk$ by the arithmetic-geometric-mean inequality.
\end{proof}

\subsection{Stars}\label{subse:star}

The following notion of oriented tree is fairly common in the literature (see e.g. \cite[p.310]{kn}).

\begin{definition}\label{def:ortr}
  An {\it oriented tree with root $v$} (or {\it rooted at $v$}) is an oriented graph with a distinguished vertex $v$ such that for each vertex $w$ there is a unique oriented path $w\to v$.

  The {\it arms} of a rooted oriented tree are its maximal oriented paths (so they all connect a leaf to the root). 
\end{definition}

In this section we focus on specific classes of rooted directed trees.

\begin{definition}\label{def:star}
  A rooted directed tree is {\it $\ell$-equidistal} if all of its arms have the same length $\ell$. It is a {\it star} if any two arms intersect only at their common target (i.e. the root of the tree).

  Finally, a rooted directed tree is a {\it $\ell$-star} if it is both a star and $\ell$-equidistal. 
\end{definition}

The preceding discussion focused on $k$-paths, which are $k$-stars with one arm. At the other end of the spectrum, we can consider $1$-stars with $a$ arms instead. The analogue of \Cref{th:mon} is

\begin{theorem}\label{th:1star}
  Let
  \begin{itemize}
  \item $E$ be a $1$-star with $a$ arms;
  \item $(\cM,+)$ a closed submonoid of $(\bR_{\ge 0},+)$;
  \item $N\in \cM$.
  \end{itemize}
  The following quantities all admit the same supremum. 
  \begin{enumerate}[(1)]
  \item\label{item:1stars} the sum
    \begin{equation*}
      \mathrm{ct}_{\cM}^{E}(\Gamma) = \sum_{\text{$1$-stars with $a$ arms contained in $\Gamma$}}\mathrm{ct}(star)
    \end{equation*}
    for $\cM$-labeled DAGs $\Gamma$ of weight $N$;
  \item\label{item:elem} the $a^{th}$ elementary symmetric sum evaluated at some $t$-tuple of elements in $\cM$ with sum $N$ (for varying $t$):
    \begin{equation}\label{eq:elemsymlambdas}
      \sum_{1\le i_1<\cdots<i_a\le t}\lambda_{i_1}\cdots\lambda_{i_a},\quad \lambda_i\in \cM,\quad \sum\lambda_i=N.
    \end{equation}
  \end{enumerate}    
\end{theorem}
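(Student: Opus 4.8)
The plan is to mirror the structure of the proof of \Cref{th:mon}: apply \Cref{th:opt} with $E$ a $1$-star with $a$ arms, determine explicitly which graphs $\Gamma$ satisfy property \Cref{eq:all2}, and then match the resulting optimization problem with the elementary-symmetric-sum problem in \Cref{item:elem} by proving two inequalities. First I would establish the analogue of \Cref{le:allonpth}: for $E$ the $1$-star with $a$ arms, the directed graphs $\Gamma$ in which every two edges lie on a common embedded copy of $E$ are exactly the ``generalized stars'' — graphs obtained from a $1$-star with $t\ge a$ arms by possibly identifying some or all of the leaves, equivalently graphs all of whose edges share a common endpoint (as head, since $E$ has all edges pointing into the root) together with the degenerate possibility of small bidirectional configurations on two vertices. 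The key point is that if $e$ and $f$ are two edges, having them on a common $a$-armed $1$-star forces them to share their target vertex; running this over all pairs forces a single common target $v$, so $\Gamma$ is a collection of $t$ edges all pointing to $v$ (from $t$ distinct sources, since $\Gamma$ is simple), with $t\ge a$ so that a copy of $E$ exists at all. I should check whether the acyclicity hypothesis or the simplicity of $E$ rules out the would-be exceptional two-vertex cases; in any event the relevant maximizers are the $t$-edge in-stars.

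Next, with $\Gamma$ an in-star with $t$ edges carrying labels $\lambda_1,\dots,\lambda_t\in\cM$ and $\sum\lambda_i=N$, I would compute $\mathrm{ct}^E_\cM(\Gamma)$: a copy of $E\subset\Gamma$ is a choice of $a$ of the $t$ edges, and its content is the product of those $a$ labels, so
\begin{equation*}
  \mathrm{ct}^E_\cM(\Gamma)=\sum_{1\le i_1<\cdots<i_a\le t}\lambda_{i_1}\cdots\lambda_{i_a},
\end{equation*}
which is precisely the quantity in \Cref{eq:elemsymlambdas}. This simultaneously yields both inequalities: \Cref{th:opt} together with the classification of \Cref{eq:all2}-graphs shows the supremum in \Cref{item:1stars} is attained (in the limit) over in-stars, hence equals a supremum of expressions of the form \Cref{eq:elemsymlambdas}, giving $\sup\,\Cref{item:1stars}\le\sup\,\Cref{item:elem}$; conversely any $t$-tuple $(\lambda_i)$ with $\sum\lambda_i=N$ is realized by the $t$-edge in-star, giving the reverse inequality $\sup\,\Cref{item:elem}\le\sup\,\Cref{item:1stars}$.

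The main obstacle I expect is the classification step — pinning down \Cref{eq:all2} for the $a$-armed $1$-star and making sure no stray configurations (short cycles on two vertices, or overlaps of arms beyond the root that nonetheless satisfy the pairwise condition) sneak in, analogously to how cycles of length $k+1$ through $2k-1$ appeared in \Cref{le:allonpth}. One must argue carefully that the ``every two edges on a common copy of $E$'' condition, for this particular $E$, collapses $\Gamma$ to a single in-star rather than something richer; unlike the path case there is less room for nondegenerate exotic examples because all of $E$'s edges meet at one vertex, but the argument still needs to handle the case $a=1$ (recovering $k=1$ of \Cref{le:allonpth}, i.e. single edges and $2$-cycles) and to confirm that when $t$ is allowed to vary the supremum in \Cref{item:elem} is genuinely the relevant one. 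Everything after the classification is a direct bijective identification of the two optimization problems, as in the proof of \Cref{th:mon}.
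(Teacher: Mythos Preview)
Your proposal is correct and follows essentially the same route as the paper: apply \Cref{th:opt}, observe that condition \Cref{eq:all2} forces $\Gamma$ to be a $1$-star with some $t$ arms, and identify $\mathrm{ct}^E_{\cM}(\Gamma)$ with the $a^{th}$ elementary symmetric function in the arm labels. The paper is considerably terser on the classification step (it simply asserts ``this clearly implies that $\Gamma$ itself must be a $1$-star''), whereas you worry about exceptional configurations; in fact your own observation that two edges on a common copy of $E$ must share their target already settles it, since simplicity then forces distinct sources and the DAG hypothesis rules out any further edges, so the exotic cases you anticipate do not arise.
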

\begin{proof}
  According to \Cref{th:opt} it is enough to range over $\cM$-labeled DAGs $\Gamma$ for which every two edges lie in some common copy of $E\subset \Gamma$. This clearly implies that $\Gamma$ itself must be a $1$-star, with, say, $t$ arms.

  If $\lambda_i$, $1\le i\le t$ are the labels of the $t$ arms of $\Gamma$ so that
  \begin{equation*}
    \sum_{i=1}^t \lambda_i = N,
  \end{equation*}
  then the content $\mathrm{ct}^E(\Gamma)$ is the $a^{th}$ elementary symmetric function evaluated at the $\lambda_i$. This concludes the proof.
\end{proof}

The following consequence is a kind of continuous version of counting the maximal number such stars in a DAG with $N$ edges.

\begin{corollary}\label{cor:1starr}
  Let
  \begin{itemize}
  \item $E$ be a $1$-star with $a$ arms;
  \item $N\in \bR_{\ge 0}$.
  \end{itemize}
  The supremum
  \begin{equation*}
    \sup_{\Gamma}~ \mathrm{ct}_{\bR_{\ge 0}}^E(\Gamma),\quad \text{$\Gamma$ an $\bR_{\ge 0}$-labeled directed graph of weight $N$}
  \end{equation*}
  is $\frac{N^a}{a!}$.
\end{corollary}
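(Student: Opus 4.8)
The plan is to combine \Cref{th:1star} with an optimization over the free parameter $t$ (the number of arms) and the $t$-tuple $(\lambda_1,\dots,\lambda_t)$ of non-negative reals summing to $N$. By \Cref{th:1star} with $\cM = \bR_{\ge 0}$, the sought supremum equals
\begin{equation*}
  \sup_{t \ge a} \ \sup_{\substack{\lambda_i \ge 0 \\ \sum \lambda_i = N}} \ e_a(\lambda_1,\dots,\lambda_t),
\end{equation*}
where $e_a$ is the $a^{\text{th}}$ elementary symmetric polynomial. So the whole problem reduces to a clean Lagrange-multiplier / convexity computation about elementary symmetric functions on a simplex.

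First I would fix $t$ and maximize $e_a$ over the simplex $\{\lambda_i \ge 0, \sum \lambda_i = N\}$. By symmetry and concavity of $e_a$ on the positive orthant (Maclaurin/Newton-type inequalities, or a direct swapping argument as the authors used for the product in the Introduction: if $\lambda_i \ne \lambda_j$ one can move both toward their average and strictly increase $e_a$, since $e_a$ is affine-linear in each variable with positive ``remaining'' coefficient), the maximum over the simplex is attained at the balanced point $\lambda_i = N/t$, giving value $\binom{t}{a}(N/t)^a$. Then I would let $t \to \infty$ and observe
\begin{equation*}
  \binom{t}{a}\Bigl(\frac Nt\Bigr)^a = \frac{t(t-1)\cdots(t-a+1)}{a!}\cdot\frac{N^a}{t^a} \ \longrightarrow\ \frac{N^a}{a!},
\end{equation*}
and moreover this sequence is increasing in $t$ (the factor $t(t-1)\cdots(t-a+1)/t^a = \prod_{j=0}^{a-1}(1-j/t)$ increases with $t$), so the supremum over all $t$ is exactly $N^a/a!$, and it is a genuine supremum rather than a maximum.

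For the matching lower bound there is nothing more to do: each balanced $t$-star of weight $N$ already realizes $\binom{t}{a}(N/t)^a$, so these values approach $N^a/a!$ from below; for the upper bound I need that no $\cM$-labeled graph of weight $N$ beats $N^a/a!$, which is exactly the chain of inequalities above (every configuration is dominated, via \Cref{th:1star}, by some $e_a$ on the simplex, which is dominated by the balanced value, which is dominated by $N^a/a!$). I expect the only mildly delicate point to be justifying that the maximum of $e_a$ over the simplex sits at the barycenter; the cleanest route is the elementary pairwise-averaging argument (strict increase of $e_a$ under $(\lambda_i,\lambda_j)\mapsto(\tfrac{\lambda_i+\lambda_j}{2},\tfrac{\lambda_i+\lambda_j}{2})$ when $\lambda_i\ne\lambda_j$, valid since $e_a$ on $n$ variables, viewed as a function of two of them with the rest fixed, has the form $P + Q(\lambda_i+\lambda_j) + R\,\lambda_i\lambda_j$ with $R = e_{a-2}(\text{rest}) \ge 0$), mirroring the $k$-path case treated after \Cref{cor:init}. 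Everything else is a one-line limit.
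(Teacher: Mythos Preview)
Your proposal is correct and follows essentially the same route as the paper: invoke \Cref{th:1star} to reduce to optimizing $e_a(\lambda_1,\dots,\lambda_t)$ on the simplex, note that for fixed $t$ the maximum is at the barycenter, and let $t\to\infty$ to obtain $N^a/a!$. You supply more detail than the paper does (the pairwise-averaging justification for the barycenter claim, the monotonicity of $\binom{t}{a}(N/t)^a$ in $t$, and the remark that the supremum is not attained), but the argument is the same.
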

\begin{proof}
  According to \Cref{th:1star}, we want the supremum of \Cref{eq:elemsymlambdas} for $\lambda_i\in \bR_{\ge 0}$ and varying $t$. For fixed $t$ that expression is maximal when all $\lambda_i$ are equal (to $\frac Nt$), so \Cref{eq:elemsymlambdas} is at most  
  \begin{equation*}
    \tbinom{t}{a}\cdot\left(\frac{N}{t}\right)^a = \frac{N^a t(t-1)\cdots (t-a+1)}{a! t^a}. 
  \end{equation*}
  As $t\to\infty$ the right hand side converges to its supremum $\frac{N^a}{a!}$, hence the conclusion.
\end{proof}

As for the discrete version, it reads

\begin{corollary}\label{cor:1starcount}
  Let $N$ and $a$ be two positive integers. The following quantities all have the same maximal value $\tbinom{N}{a}$
  \begin{enumerate}[(1)]
  \item\label{item:1} the number of $1$-stars with $a$ arms contained in directed graph with $N$ edges;
  \item\label{item:2} the sum
    \begin{equation*}
      \sum_{\text{$1$-stars with $a$ arms contained in $\Gamma$}}\mathrm{ct}(star)
    \end{equation*}
    for $\bN_{\ge 0}$-labeled directed graphs $\Gamma$ of weight $N$;
  \item\label{item:3} the $a^{th}$ elementary symmetric sum evaluated at some $t$-tuple of non-negative integers with sum $N$ (for varying $t$):
    \begin{equation}\label{eq:lambdasn}
      \sum_{1\le i_1<\cdots<i_a\le t}\lambda_{i_1}\cdots\lambda_{i_a},\quad \lambda_i\in \bN_{\ge 0},\quad \sum\lambda_i=N.
    \end{equation}
  \end{enumerate}    
\end{corollary}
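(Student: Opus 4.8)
The plan is to obtain \Cref{cor:1starcount} from \Cref{th:1star} specialized to $\cM=\bN_{\ge 0}$, after identifying the three quantities and then solving the resulting combinatorial optimization explicitly.

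First I would dispense with the equality of the maximal values in \Cref{item:1,item:2}. As in \Cref{def:lab}, regard a plain directed graph with $N$ edges as an $\bN_{\ge 0}$-labeled graph of weight $N$, the label of $(x,y)$ recording the multiplicity of edges $x\to y$. Since the $1$-star $E$ is simple, a copy of $E$ inside the multigraph is the choice of a root together with one edge along each of $a$ arms emanating from $a$ distinct sources; hence a labeled $1$-star with arm-labels $m_1,\dots,m_a$ accounts for exactly $m_1\cdots m_a$ honest $1$-stars in the multigraph, and this number is precisely its content. Summing over labeled $1$-stars of $\Gamma$ gives $\mathrm{ct}^E_{\bN_{\ge 0}}(\Gamma)$, so \Cref{item:1,item:2} are the same optimization problem (the weight $\sum_{x,y}\ell(x,y)$ of the labeled graph being the edge count $N$). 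The equality of the maxima in \Cref{item:2,item:3} is then \Cref{th:1star} with $\cM=\bN_{\ge 0}$: by \Cref{th:opt} the supremum is attained on a graph satisfying \Cref{eq:all2}, which must be a $1$-star (in particular acyclic, so the DAG hypothesis in \Cref{th:1star} is automatic), and for such a graph with arm-labels $\lambda_1,\dots,\lambda_t\in\bN_{\ge 0}$ summing to $N$ the total content is the $a^{\text{th}}$ elementary symmetric function $e_a(\lambda_1,\dots,\lambda_t)$.

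It then remains to evaluate
\begin{equation*}
  \max_{t}\ \max_{\lambda_i\in\bN_{\ge 0},\ \sum\lambda_i=N} e_a(\lambda_1,\dots,\lambda_t)=\tbinom{N}{a}.
\end{equation*}
For the lower bound take $t=N$ and $\lambda_1=\cdots=\lambda_N=1$, giving $e_a(1,\dots,1)=\binom{N}{a}$. For the upper bound I would use the generating identity $\prod_{i=1}^{t}(1+\lambda_i x)=\sum_{a\ge 0}e_a(\lambda_1,\dots,\lambda_t)\,x^a$ together with the coefficientwise domination $1+\lambda x\preceq(1+x)^{\lambda}$, valid for every integer $\lambda\ge 0$ because $(1+x)^{\lambda}=1+\lambda x+\binom{\lambda}{2}x^2+\cdots$ has non-negative coefficients. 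Since the coefficientwise order on polynomials with non-negative coefficients is preserved under products, multiplying these inequalities over $i$ yields $\prod_i(1+\lambda_i x)\preceq(1+x)^{\sum_i\lambda_i}=(1+x)^{N}$, and reading off the coefficient of $x^a$ gives $e_a(\lambda_1,\dots,\lambda_t)\le\binom{N}{a}$. (Alternatively, one can argue by the exchange move replacing a label $\lambda_i\ge 2$ with the two labels $\lambda_i-1$ and $1$: from $(1+(\lambda_i-1)x)(1+x)=1+\lambda_i x+(\lambda_i-1)x^2$ one sees this never decreases $e_a$, and iterating reduces every label to $1$, leaving $N$ ones and value $\binom{N}{a}$.)

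The identifications in the first two steps are pure bookkeeping, so the only step carrying content is the upper bound $e_a(\lambda)\le\binom{N}{a}$; even there the generating-function comparison (or the exchange move) makes it essentially routine, and I do not expect a genuine obstacle.
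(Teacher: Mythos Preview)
Your proof is correct and matches the paper's structure for identifying the three quantities: items \Cref{item:1} and \Cref{item:2} via the multigraph-to-labeled-graph translation of \Cref{def:lab}, and items \Cref{item:2} and \Cref{item:3} via \Cref{th:1star} with $\cM=\bN_{\ge 0}$. The lower bound (all labels equal to $1$) is also the same.

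The genuine difference is in the upper bound $e_a(\lambda_1,\dots,\lambda_t)\le\binom{N}{a}$. The paper reuses the real-variable estimate from the proof of \Cref{cor:1starr}, namely $e_a(\lambda)\le\binom{t}{a}(N/t)^a$ for fixed $t$, notes that this expression is increasing in $t$, and then invokes the integrality constraint $\lambda_i\in\bN_{>0}$ to force $t\le N$, so the bound caps at $\binom{N}{a}$. Your generating-function comparison $\prod_i(1+\lambda_i x)\preceq(1+x)^{N}$ (or the equivalent splitting move $\lambda_i\mapsto(\lambda_i-1,1)$) is purely combinatorial and self-contained: it never leaves $\bN_{\ge 0}$ and does not pass through the continuous optimum. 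The paper's route has the virtue of recycling \Cref{cor:1starr} and making the discrete--continuous link explicit; yours is more direct and arguably cleaner for the integer statement on its own.
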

\begin{proof}
That \Cref{item:1} and \Cref{item:2} have the same optimal value follows as in the discussion following \Cref{th:init-all}, by recasting repeated edges in a directed graph as $\bN_{\ge 0}$-labels. On the other hand, the fact that \Cref{item:2,item:3} have the same optimal value follows from \Cref{th:1star} applied to $\cM=\bN_{\ge 0}$ and $E$ an $a$-arm $1$-star. It thus remains to prove that the supremum is a maximum, and that that maximum is $\tbinom{N}{a}$.  

As in the proof of \Cref{th:1star}, we can assume $\Gamma$ is a $1$-star with $t$ arms and respective labels $\lambda_i\in \bN_{>0}$, $1\le i\le t$ (the labels can be assumed positive because $0$ labels make no contribution to \Cref{eq:lambdasn}). In particular, $t\le N$. 

Having fixed $t$, we observed in the proof of \Cref{cor:1starr} that the elementary symmetric function \Cref{eq:lambdasn} is dominated by
\begin{equation*}
  \tbinom{t}{a}\cdot\left(\frac{N}{t}\right)^a = \frac{N^a t(t-1)\cdots (t-a+1)}{a! t^a}.
\end{equation*}
That expression is increasing in $t$, so reaches its maximum at $t=N$. That maximum is precisely
\begin{equation*}
  \tbinom{t}{a} = \tbinom{N}{a},
\end{equation*}
and is achievable by an $\bN_{\ge 0}$-labeled $N$-armed $1$-star by simply assigning label $\lambda_i=1$ to each of the $N$ edges.
\end{proof}


\bibliographystyle{plain}
\addcontentsline{toc}{section}{References}

\Addresses

\end{document}